\newtheorem{theorem}{Theorem}[section]
\newtheorem{thm}{Theorem}[section]
\newtheorem{lem}[thm]{Lemma}
\newtheorem{definition}[thm]{Definition}
\newtheorem{example}[thm]{Example}
\title{Every toroidal graphs without adjacent triangles is at most 8}
\author{Fangyu Tian$^{1}$\hskip 0.2in  yuxue Yin$^{2}$}
\address{
$^{1}$\small Department of Mathematics, Central China Normal University, Wuhan, Hubei, China.\\
$^2$\small Department of EE, Tsinghua University, Beijing, China.
}
\email{yinyuxue945@mail.tsinghua.edu.cn}
\begin{document}
\maketitle
\section{Abstract}
Odd coloring is a proper coloring with an additional restriction that every non-isolated vertex has some color that appears an odd number of times in its neighborhood. The minimum number of colors $k$ that can ensure an odd coloring of a graph $G$ is denoted by $\chi_o(G)$. We say $G$ is odd $k$-colorable if $\chi_o(G)\le k$. This notion is introduced very recently by Petruševski and Škrekovski, who proved that if $G$ is planar then $ \chi_{o}(G) \leq 9 $. 
A toroidal graph is a graph that can be embedded on a torus. Note that a $K_7$ is a toroidal graph, $\chi_{o}(G)\leq7$. Tian and Yin proved that every toroidal graph is odd $9$-colorable and every toroidal graph without $3$-cycles is odd $9$-colorable.
In this paper, we proved that every toroidal graph without adjacent $3$-cycles is odd $8$-colorable.   

\section{Introduction}

An odd coloring of a graph is a proper coloring with the additional constraint that each non-isolated vertex has at least one color that appears an odd number of times in its neighborhood. We use $c_o(v)$ to denote the color of $v$. A graph G is odd $k$-colorable if it has an odd $k$-coloring. The odd chromatic number of a graph G, denoted by $\chi_o(G)$, is the minimum $k$ such that G has an odd $k$-coloring. 

Odd coloring was introduced very recently by Petru$\breve{s}$evski and $\breve{S}$krekovski~\cite{petruvsevski2021colorings}, who proved that planar graphs are odd $9$-colorable and conjecture every planar graph is $5$-colorable. Petr and Portier~\cite{petr2022odd} proved that planar graphs are odd $8$-colorable. Fabrici\cite{fabrici2022proper} proved a strengthening version about planar graphs regarding similar coloring parameters. Eun-Kyung Cho~\cite{cho2022odd} focused on a sparse graph and conjectured that, for $c\ge4$, if G is a graph with $mad(G)\le \frac{4c-4}{c+1}$, then $\chi_o(G)\le c$. 
A toroidal graph is a graph that can be embedded on a torus. Note that a $K_7$ is a toroidal graph, $\chi_{o}(G)\geq7$. Tian and Yin proved that every toroidal graph is odd $9$-colorable~\cite{fangyu22a}.
In this paper, we tight the bound one step further and proved that every toroidal graph without adjacent triangles is odd $8$-colorable, as is shown in Theorem~\ref{th1}.

\begin{theorem}\label{th1}
If $G$ is a toroidal graph without adjacent triangles, then    $\chi_o(G)\leq8$.
\end{theorem}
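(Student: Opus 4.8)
The plan is to argue by contradiction using the discharging method, the standard tool for coloring results on surfaces. Suppose Theorem~\ref{th1} fails, and let $G$ be a counterexample embedded on the torus that minimizes $|V(G)|+|E(G)|$; then $G$ is connected, admits no odd $8$-coloring, yet every graph with fewer vertices or edges (in particular every proper subgraph) does. Here ``without adjacent triangles'' means that no two $3$-cycles share an edge, so each edge lies in at most one triangular face, which is the crucial sparsity hypothesis. Fix the embedding and write $d(v)$ for vertex degrees and $\ell(f)$ for face lengths.

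The technical heart is to assemble a list of \emph{reducible configurations}: local structures that cannot appear in $G$, because an odd $8$-coloring of a suitable smaller graph could be extended across them. I would first handle low-degree vertices: delete such a vertex $v$, take an odd $8$-coloring of $G-v$, and then choose $c_o(v)$. The subtlety special to odd coloring is that restoring $v$ enlarges the neighborhoods of each $u\in N(v)$, so a color that was odd in $N_{G-v}(u)$ may become even; thus $c_o(v)$ must simultaneously be proper, supply $v$ with its own odd color, and avoid destroying the odd condition at its neighbors. Counting the colors forbidden by these three requirements shows vertices of small degree, and pairs or clusters of small-degree vertices, are reducible, as are certain light configurations surrounding a $3$-face. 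I expect \textbf{this reducibility analysis to be the main obstacle}, since the coupling between a vertex and the parities at its neighbors makes the extension arguments considerably more delicate than for ordinary proper coloring, and on the torus the color budget is tight enough that only fairly strong reducibility will suffice.

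Next I would set up discharging. Assign initial charge $\mu(v)=d(v)-4$ to each vertex and $\mu(f)=\ell(f)-4$ to each face. Since $\sum_v d(v)=\sum_f \ell(f)=2|E|$ and Euler's formula on the torus gives $|V|-|E|+|F|=0$, the total charge is $\sum_v(d(v)-4)+\sum_f(\ell(f)-4)=4(|E|-|V|-|F|)=0$. Only $3$-faces and vertices of degree at most $3$ carry negative initial charge, so I would design rules sending charge from large faces and high-degree vertices to these deficient objects. It is precisely here that ``no adjacent triangles'' is indispensable: because each edge is in at most one triangle, a vertex or a large face is incident to comparatively few $3$-faces, which caps the total demand and lets the donors cover it. Verifying that, once every reducible configuration is excluded, each vertex and each face ends with nonnegative final charge is a finite case check organized by $d(v)$ and by the distribution of $3$-faces in a vertex's rotation.

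Because the total charge is exactly $0$ rather than negative (the feature that distinguishes the torus from the plane, and the reason an extra hypothesis is needed to reach $8$ colors), nonnegativity alone is not yet a contradiction. The endgame is therefore an equality analysis: if every final charge is nonnegative and their sum is $0$, then every final charge is $0$, which forces each discharging rule that fired to fire with equality and pins $G$ down to a highly rigid structure (essentially a $4$-regular quadrangulation-type configuration with no $3$-faces at all). I would then show this extremal structure either contains one of the reducible configurations already ruled out or can be odd $8$-colored directly, contradicting the choice of $G$. This final rigidity step, together with the odd-condition reducibility, is where I anticipate the real work; the discharging rules themselves should then be light, precisely because the no-adjacent-triangles hypothesis keeps the triangular demand small.
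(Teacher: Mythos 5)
Your overall architecture matches the paper's: a minimal counterexample, reducible configurations, discharging with charges $d(x)-4$, and an endgame exploiting the fact that on the torus the total charge is $0$ rather than negative, so one must ultimately exhibit strictly positive total final charge. But there is a concrete gap that would prevent your plan from being completed: your minimality criterion (minimize $|V|+|E|$) only supports reductions that pass to smaller graphs, i.e.\ essentially vertex deletion. With 8 colors, vertex deletion only proves reducibility of very-low-degree vertices: to re-insert a deleted vertex $v$ you must avoid, for each neighbor $u$, both its color $c(u)$ and its unique odd color $c_o(u)$ (so as not to destroy the odd condition at $u$), i.e.\ up to $2d(v)$ forbidden colors, which already exhausts the palette when $d(v)=4$. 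So under your setup nothing can be forbidden about $4^+$-vertices, and the discharging cannot close: a $4$-regular quadrangulation of the torus --- exactly the ``rigid extremal structure'' you anticipate at the end --- has every vertex and every face at charge $0$, contains no vertex of degree at most $3$ and no $3$-face, and is therefore immune to every reducible configuration your framework can produce; you would be left having to odd-$8$-color all such graphs (and all other tight structures) by hand, which is not a finite check.

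The paper's enabling idea, absent from your proposal, is a different minimality: fewest $4^+$-vertices, then fewest $4^+$-neighbors of $4^+$-vertices, then fewest edges. This permits reductions that \emph{enlarge} the graph --- splitting an edge $uv$ by inserting a new $2$-vertex --- because splitting decreases the number of $4^+$-neighbors of $4^+$-vertices even though it increases $|V|+|E|$. All the decisive lemmas (no two adjacent $4$-vertices, no two adjacent ``convenient'' vertices, the exclusion of special $6$-vertex configurations) are proved by such splittings, and it is precisely these lemmas that both make the face/vertex charge verification work and kill the tight extremal structures in the endgame. Equally central, and only implicit in your sketch, is the parity dichotomy the whole argument pivots on: an odd-degree vertex automatically has an odd color, and an even-degree vertex with a $2$-neighbor can be repaired by recoloring that $2$-neighbor (the paper's ``convenient'' vertices); the discharging rules and the strict-positivity argument are organized entirely around this distinction, not merely around degrees and $3$-faces. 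Without the splitting reductions and this convenient/non-convenient machinery, your plan stalls exactly at the two places you yourself flag as ``the real work.''
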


We use $V(G)$, $E(G)$, and $F(G)$,
respectively, to represent the vertices, edges, and faces of a graph $G$. Two cycles are
{\em  adjacent} if they share at least one common edge. A {\em  $k$-vertex} ({\em  $k^{+}$-vertex} or {\em  $k^{-}$-vertex}) is a vertex of degree $k$ (at least $k$ or at most $k$). Similarly, a {\em  $k$-face} ({\em  $k^{+}$-face} or {\em  $k^{-}$-face}) is a face of degree $k$ (at least $k$ or at most $k$). A {\em  $k$-neighbor} ({\em  $k^{+}$-neighbor} or {\em  $k^{-}$-neighbor}) of $v$ is a $k$-vertex ({\em $k^{+}$-vertex} or {\em  $k^{-}$-vertex}) adjacent to $v$.  For $f\in F(G)$, $f=[v_1v_2\ldots v_k]$ denotes that $v_1,v_2,\ldots,v_k$ are the vertices lying on the boundary of $f$ in clockwise  order.  

For a $4^+$-vertex $v$, we say $v$ is {\em convenient} if $v$ is an odd vertex or an even vertex with  a $2$-neighbor, otherwise $v$ is  {\em non-convenient}.
 A {\em  $k_i$-vertex} is a  $k$-vertex incident with $i$ $2$-vertices.

\section{Proof of Theorem~\ref{th1}}

Let $G$ be a counterexample to Theorem~\ref{th1} with the minimum number of $4^+$-vertices, and subject to that, the number of $4^+$-neighbors of $4^+$-vertices $G$ is minimized, and subject to these conditions $|E(G)|$ is minimized.

\begin{lem}\label{no-3-v}
 $G$ has no $3$-vertices.
\end{lem}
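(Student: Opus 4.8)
The plan is to argue by contradiction using the minimality of $G$. Suppose $G$ contains a $3$-vertex $v$ with $N(v)=\{v_1,v_2,v_3\}$, and set $G'=G-v$. Removing a vertex preserves toroidality and cannot create a pair of adjacent triangles, so $G'$ is again a toroidal graph without adjacent triangles. I would first verify that $G'$ precedes $G$ in the lexicographic minimality order. Deleting a vertex only lowers degrees, so no new $4^+$-vertex appears and the number of $4^+$-vertices does not increase; likewise $G'$ is a subgraph of $G$, so no new edge between two $4^+$-vertices is created and the number of $4^+$-neighbors of $4^+$-vertices does not increase either. Since $|E(G')|=|E(G)|-3<|E(G)|$, the graph $G'$ is strictly smaller than $G$ in the chosen order, and hence by minimality $G'$ admits an odd $8$-coloring $\phi$.

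The heart of the argument is to extend $\phi$ to an odd $8$-coloring of $G$ by assigning one color to $v$, which contradicts the choice of $G$. The crucial observation is that the odd condition at $v$ itself comes for free: $v$ has exactly three neighbors, and since $3$ is odd the multiplicities of the colors appearing in $N(v)$ cannot all be even, so some color occurs an odd number of times in $N(v)$ regardless of how $v$ is colored. Thus only two types of constraints remain. For properness we must avoid the at most three colors $\phi(v_1),\phi(v_2),\phi(v_3)$. To protect the odd condition at each neighbor, observe that each non-isolated $v_i$ has in $G'$ some color $a_i$ appearing an odd number of times in its neighborhood; inserting $v$ alters this count only for the single color assigned to $v$, so avoiding $a_i$ keeps $a_i$ odd at $v_i$. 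Consequently, avoiding $\{\phi(v_1),\phi(v_2),\phi(v_3)\}\cup\{a_1,a_2,a_3\}$ forbids at most six colors, leaving at least $8-6=2$ colors available for $v$; coloring $v$ with any of them produces an odd $8$-coloring of $G$, the desired contradiction.

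The only bookkeeping that needs care concerns neighbors of very low degree. If some $v_i$ is a $2$-vertex it becomes a $1$-vertex in $G'$, in which case the color of its unique remaining neighbor plays the role of $a_i$; and if $v_i$ were a $1$-vertex it becomes isolated in $G'$, imposing no odd constraint, so it can simply be recolored after $v$ is colored. In every such case the forbidden set only shrinks, so the count $8-6\ge 1$ is never tight against us. I expect the genuinely delicate step to be the verification that $G'$ respects the first two (non-edge) minimality parameters, since it is these parameters, rather than the easy color-counting at the end, that make the reduction legitimate; once that is settled, the extension is routine.
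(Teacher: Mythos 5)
Your proposal is correct and follows essentially the same route as the paper: delete the $3$-vertex $v$, invoke minimality to odd $8$-color $G-v$, then color $v$ avoiding the six colors $\{\phi(v_1),\phi(v_2),\phi(v_3),a_1,a_2,a_3\}$, noting that the odd condition at $v$ is automatic since $v$ has an odd number of neighbors. Your additional verification that $G-v$ does not increase the first two minimality parameters (and your handling of degenerate low-degree neighbors) is bookkeeping the paper leaves implicit, but it does not change the argument.
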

\begin{proof}
  Suppose that $G$ has a $3$-vertex $v$. We assume that $v_1,v_2$ and $v_3$ are the neighbors of $v$. Let $G'=G-v$. Since $G'$ has fewer edges than $G$, $G'$ has an odd $8$-coloring $c'$ by the minimality.   Color each vertex other than $v$ in $G$  with the same color in $G'$ and color  $v$ with $\lbrack 8 \rbrack\setminus\{c'(v_1),c'(v_2),c'(v_3),c'_o(v_1),c'_o(v_2),c'_o(v_3)\}$. Since $v$ is a $3$-vertex, $v$ must have an odd coloring.  Then the coloring of $G'$ can return back to $G$, a contradiction.
\end{proof}

\begin{lem} \label{conven}
 If $v$ is convenient, then $v$ always admits an odd coloring.
 \end{lem}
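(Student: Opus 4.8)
The plan is to split on the definition of convenience and to show that in each case a color occurring an odd number of times in $N(v)$ can always be arranged. First, suppose $v$ has odd degree. I would note that this case needs no recoloring at all: in any proper coloring the neighbours of $v$ are partitioned into color classes whose sizes sum to $\deg(v)$, and since $\deg(v)$ is odd these sizes cannot all be even. Hence at least one color appears an odd number of times on $N(v)$, so an odd-degree vertex automatically satisfies the odd constraint and ``admits an odd coloring'' for free, regardless of how its neighbourhood is colored.

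Second, suppose $v$ has even degree but has a $2$-neighbour $u$; write $w$ for the second neighbour of $u$. Here the parity count can fail, so I would repair $v$ by recoloring $u$. If $v$ already sees some color an odd number of times we are done, so assume every color appears an even number of times on $N(v)$. Deleting $u$'s contribution, the color $c(u)$ then appears an odd number of times on $N(v)\setminus\{u\}$ while every other color still appears an even number of times. Consequently, recoloring $u$ with any color $c^\ast\neq c(u)$ leaves $c(u)$ occurring an odd number of times on $N(v)$, so $v$ acquires an odd color. To preserve properness I only need $c^\ast\notin\{c(v),c(w)\}$, and since $8$ colors are available while at most the three colors $c(u),c(v),c(w)$ are excluded, such a $c^\ast$ exists.

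The step I expect to be the main obstacle is checking that recoloring $u$ does not spoil the odd condition elsewhere, namely at $u$ itself and at its other neighbour $w$, whose neighbourhood also contains $u$. For $u$, being a $2$-vertex, its odd condition depends only on whether $c(v)\neq c(w)$, which recoloring $u$ does not touch. For $w$, I would exploit the slack in the choice of $c^\ast$: changing a single neighbour's color flips the parity of at most two color classes at $w$, so only a bounded number of choices of $c^\ast$ could leave $w$ with an all-even neighbourhood, and among the at least $8-3=5$ admissible colors I can select one that keeps $w$ satisfied. This bookkeeping, rather than the parity idea, is where the care is needed; the two core observations—that odd degree forces an odd color and that a $2$-neighbour acts as a free parity switch—are what drive the proof.
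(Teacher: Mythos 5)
Your proposal is correct and takes essentially the same route as the paper: the odd-degree case follows automatically from the parity of $\deg(v)$, and the even-degree case is handled by recoloring the $2$-neighbour with a color outside a small forbidden set (the colors of its two neighbours for properness, plus a color protecting the other neighbour's odd condition), exactly as the paper does by excluding $c(v)$, $c(u)$ and the odd colors $c_o(v)$, $c_o(u)$. Your parity bookkeeping at $w$ is in fact a more careful rendering of the paper's step of simply forbidding $c_o(u)$ — one can check that at most one choice of $c^\ast$ can leave $w$ with an all-even neighbourhood, so your margin of five admissible colors is ample.
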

 \begin{proof}
 By the definition of convenient vertex, $v$ is either an odd vertex or an even vertex
with a $2$-neighbor. If $v$ is an odd vertex, then $v$ always admits an odd coloring. If $v$ is an even vertex with a $2$-neighbor $v'$, then $v$ always admits an odd coloring via recoloring $v'$ with
a color in $[8]\setminus  \{c_o(v), c(v), c(u), c_o(u)\}$, where $u$ is the other neighbor of $v_i$, $c_o(z)$ is the odd color of $z$ for $z\in\{u,v\}$ while $c_o(z)\neq c(v')$ .
 \end{proof}

\begin{lem}\label{tool}  The following statements hold:
\begin{enumerate}[(1)]
   \item Any two $2$-vertices are not adjacent.
    \item Any two convenient vertices are not adjacent.
    \item $k$-vertex is adjacent to at most  $2k-8$ $2$-vertices (or convenient vertices) where $4\leq k \leq 7$.
   \end{enumerate}
\end{lem}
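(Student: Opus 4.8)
The plan is to prove all three statements by contradiction against the minimal counterexample $G$, repeatedly using two facts. By Lemma~\ref{no-3-v} every vertex of $G$ is a $2$-vertex or a $4^{+}$-vertex, and by Lemma~\ref{conven} the odd condition at any convenient vertex can always be restored: for odd-degree vertices it holds automatically, since the colors appearing in an odd-size neighborhood cannot all occur an even number of times, and for even-degree vertices one recolors a $2$-neighbor as in the proof of Lemma~\ref{conven}. In each case I remove a few vertices or edges to obtain a $G'$ that is still toroidal with no adjacent triangles and is strictly smaller than $G$ in the lexicographic order (number of $4^{+}$-vertices, then number of $4^{+}$-neighbors of $4^{+}$-vertices, then $|E|$); minimality yields an odd $8$-coloring $c'$ of $G'$, which I extend to $G$. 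The recurring device is that a $2$-vertex forbids only its two neighbors' colors, hence has a palette of size at least $6$, and any $4^{+}$-neighbor of a $2$-vertex is automatically convenient.

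For (1), let $u,v$ be adjacent $2$-vertices with other neighbors $u',v'$. Deleting the edge $uv$ leaves the two higher minimality parameters unchanged (it only lowers the degrees of two $2$-vertices and joins two $2$-vertices) while removing one edge, so $G'=G-uv$ is strictly smaller and receives $c'$. Since $N(u')$ and $N(v')$ are unchanged, only the colors of $u,v,u',v'$ are in play. I recolor $u$ and $v$ to meet properness and their own odd conditions, namely $c(u)\neq c(v)$, $c(u')\neq c(v)$ and $c(v')\neq c(u)$, which is at most three forbidden colors per vertex and so is solvable. Each outer neighbor $u',v'$ is then either a $4^{+}$-vertex, hence convenient and restorable by Lemma~\ref{conven}, or itself a $2$-vertex, whose odd condition is a single extra inequality; in all cases the recoloring can be chosen within the $8$ colors, contradicting the choice of $G$.

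For (2), let $x,y$ be adjacent convenient vertices; both are $4^{+}$-vertices. Deleting $xy$ either drops an endpoint to degree $3$, lowering the number of $4^{+}$-vertices, or keeps both $4^{+}$ and destroys the adjacent $4^{+}$-pair $xy$, lowering the number of $4^{+}$-neighbors of $4^{+}$-vertices; either way $G'=G-xy$ is strictly smaller and receives $c'$. Re-adding $xy$ changes only $N(x)$ and $N(y)$, so after restoring properness $c(x)\neq c(y)$ by recoloring one endpoint into a free color, the odd conditions of $x$ and $y$ are restored by Lemma~\ref{conven}, exactly as in (1).

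Statement (3) is the substantive one. For $k=4$ the bound is $0$: if a $4$-vertex $v$ had a $2$-neighbor $u$ with other neighbor $u'$, then $G'=G-u$ makes $v$ a $3$-vertex, strictly lowering the number of $4^{+}$-vertices, and $c'$ extends because $u'$ and the now-convenient $v$ are convenient. For $5\le k\le 7$ I would again delete enough demanding neighbors of $v$ to push $\deg v$ below $4$ and then re-introduce them. In the $2$-vertex case $v$ is convenient and its own odd condition is free, so the binding constraints are properness at $v$ together with the odd conditions $c(v)\neq c(w_i)$ of the re-added $2$-vertices $u_i$ (with $w_i$ the second neighbor of $u_i$); in the convenient-neighbor case part (2) forces $v$ itself to be non-convenient, so now obtaining an odd color at $v$ is the delicate requirement. \textbf{The main obstacle} in both cases is the palette accounting: showing that the properness demands, the odd conditions of the demanding neighbors, and the odd-color requirement at $v$ and at its non-convenient neighbors can be met simultaneously over $8$ colors exactly when the number of demanding neighbors is at most $2k-8$. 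I expect this to require separate treatment of even and odd $k$, and to use the no-adjacent-triangle hypothesis to control how the second neighbors $w_i$ and the non-convenient neighbors of $v$ overlap, which is what sharpens a crude properness count into the exact threshold $2k-8$.
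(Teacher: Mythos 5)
Your parts (2) and (3) do not go through as written. In part (3) — the substantive statement — you never give the proof: you propose deleting "demanding" neighbors of $v$ and re-inserting them, and then explicitly defer the palette accounting as "the main obstacle," expecting a parity case split and use of the no-adjacent-triangle hypothesis. The paper closes this with one short count that needs neither. It deletes $v$ itself (this strictly lowers the number of $4^+$-vertices, so minimality applies), keeps the coloring of $G-v$ on all other vertices, and colors $v$ directly: a $2$-neighbor $v'$ forbids only one color (the color of the far neighbor of $v'$; afterwards $v'$ is recolored to restore properness), a convenient neighbor forbids only its own color (Lemma~\ref{conven} guarantees its odd condition can always be restored), and any other neighbor forbids two colors, $c(v')$ and $c_o(v')$. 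If $v$ had at least $2k-7$ demanding neighbors, the number of forbidden colors would be at most $(2k-7)+2\bigl(k-(2k-7)\bigr)=7<8$, so a color remains for $v$, a contradiction. The threshold $2k-8$ falls out of this asymmetric $1$-versus-$2$ accounting; no parity analysis and no triangle condition is needed.

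Part (2) contains a step that would fail, not merely a gap. After deleting the edge $xy$ you "restore properness by recoloring one endpoint into a free color" and then invoke Lemma~\ref{conven} for $x$ and $y$. But recoloring the $4^+$-vertex $x$ also changes the color multiset in the neighborhood of every neighbor $z$ of $x$; to preserve each such $z$'s odd condition you must avoid both $c(z)$ and $c_o(z)$ for all $z\in N(x)$, i.e.\ up to $2d(x)\ge 8$ colors, and those neighbors need not be convenient or otherwise repairable — so a "free color" need not exist. The paper's surgery is designed precisely to avoid any recoloring: it subdivides $uv$ by a $2$-vertex $w$ (which preserves toroidality and the absence of adjacent triangles, and strictly lowers the number of $4^+$-neighbors of $4^+$-vertices), and the odd condition at $w$ forces $c'(u)\neq c'(v)$ in $G'$; hence the restriction of $c'$ to $V(G)$ is already proper, and only the odd conditions at the two convenient vertices $u,v$ need Lemma~\ref{conven}. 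Your part (1) is essentially sound and close to the paper's argument, though the paper's version is cleaner: it deletes the $2$-vertex $v$ and recolors only $v$ (avoiding $c'(u),c'(v'),c_o'(v'),c'(u')$), so the outer vertices $u',v'$ are never touched and the circularity you flirt with — repairing $u'$ by recoloring $u$ after $u$'s color was just constrained — never arises.
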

\begin{proof}
\begin{enumerate}[(1)]
    \item Suppose otherwise that two $2$-vertices $u$ and $v$ are adjacent. Let $z'$ be the other neighbor of $z$ for $z\in\{u,v\}$. Let $G'=G-v$. Then $G'$ has an odd $8$-coloring $c'$ by the minimality of $G$.   Color each vertex other than $v$ in $G$ with the same color in $G'$ and color $v$ with $[8]\setminus\{c'(u),c'(v'),c_o'(v'),c'(u')\}$.
    Then the odd $8$-coloring of $G'$ can return back to $G$,  a contradiction.

\item Suppose otherwise that convenient vertices $u$ and $v$ are adjacent. By the definition of convenient vertices, $u$ and $v$ are $4^+$-vertices. Let $G'$ be the graph obtained from $G$ by
splitting edge $uv$ with a $2$-vertex $w$. Then $G'$ has no adjacent $3$-cycles. Since $4^+$-vertices $u$ and $v$ have fewer $4^+$-neighbors in
$G'$, there is an odd $8$-coloring $c'$ of $G'$ by the minimality of $G$. Note that $c'(u)  \neq c'(v)$
since $w$ is a $2$-vertex. Let $c(z) = c'(z)$ for $z \in V(G)$. Then $c$ is an odd coloring of $G$ since
$u$ and $v$ has an odd coloring by Lemma \ref{conven}.
   \item Suppose otherwise that a $k$-vertex $v$ is adjacent to at least $2k-7$ $2$-vertices or   convenient vertices  where $4\leq k \leq 7$. 
       Let $G'=G-v$. Then $G'$ has an odd $8$-coloring $c$ by the minimality of $G$.
 We want to color $v$ such that $v$ admits an odd $8$-coloring while each other $4^+$-vertex in $G$ has the same color as in $G'$ and some $2$-vertices can be recolored such that its neighbors admit an odd coloring.
Note that if $v$ has a $2$-neighbor $v'$, then we only need to avoid the color of the other neighbor of $v'$ in the coloring $v$. After that, recolor $v'$ with the color  different from $v$ and the other neighbor of $v'$.
If $v$ has a convenient neighbor $v'$, then we only need to avoid the color of   $v'$ in the coloring $v$ since $v'$ always has odd coloring by Lemma \ref{conven}.    If $v$ has a neighbor $v'$ which is neither $2$-neighbor nor convenient neighbor, then we need to avoid two colors ($c(v')$ and $c_o(v')$)   in the coloring $v'$.
Thus, $v$ has at most $(2k-7)+2(k-(2k-7))=7$ forbidden colors,
and there is one color left for $v$. Then the coloring of $G'$ can return back to $G$, a contradiction.
   \end{enumerate}
  \end{proof}
  \begin{lem}\label{4-f}
  (1)If $f$ is a   $3$-face, then $f$ is not incident with any $2$-vertex.

  (2) If $f$ is a $4$- or $5$-face, then $f$ is incident with at most one $2$-vertex.
  \end{lem}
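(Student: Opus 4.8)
The unifying idea is that a $2$-vertex can be given an odd color for free as soon as its two neighbors receive distinct colors, since its neighborhood then contains two singleton color classes. Every reduction below deletes a single $2$-vertex $u$; the resulting graph $G-u$ is toroidal and still has no adjacent triangles, and it precedes $G$ in the minimality order (it has no more $4^{+}$-vertices and strictly fewer edges), so it carries an odd $8$-coloring $c'$ by the minimality of $G$. In each case I extend $c'$ to $u$ against at most the forbidden colors $c'(\,\cdot\,)$ and $c'_o(\,\cdot\,)$ of $u$'s neighbors; avoiding a neighbor's odd color $c'_o$ keeps that neighbor odd after $u$ is reinserted.

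For (1), let $v_1$ be a $2$-vertex on the $3$-face $[v_1v_2v_3]$, so $v_1$ has exactly the neighbors $v_2,v_3$, which are $4^{+}$-vertices by Lemmas~\ref{no-3-v} and~\ref{tool}(1). Take an odd $8$-coloring $c'$ of $G-v_1$. Because $v_2v_3\in E(G)$ we have $c'(v_2)\ne c'(v_3)$, so $v_1$ is automatically odd once properly colored; choosing its color outside $\{c'(v_2),c'(v_3),c'_o(v_2),c'_o(v_3)\}$ (at most $4<8$ colors) completes an odd $8$-coloring of $G$, a contradiction. Hence a $3$-face carries no $2$-vertex, proving (1).

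For the $4$-face $[v_1v_2v_3v_4]$ of (2), two incident $2$-vertices are non-adjacent by Lemma~\ref{tool}(1), hence opposite; say $v_1,v_3$, each with neighbor set exactly $\{v_2,v_4\}$. The trick is to delete only $v_1$: in $G-v_1$ the vertex $v_3$ survives as a $2$-vertex with neighbors $v_2,v_4$, so its oddness in $c'$ forces $c'(v_2)\ne c'(v_4)$. Thus $v_1$'s two neighbors already carry distinct colors, and coloring $v_1$ outside $\{c'(v_2),c'(v_4),c'_o(v_2),c'_o(v_4)\}$ makes it proper and odd while keeping $v_2,v_4$ odd — again a contradiction.

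The $5$-face is where the argument becomes delicate. On a $5$-cycle any two non-adjacent vertices share exactly one neighbor, so two incident $2$-vertices may be labeled $v_1$ (neighbors $v_2,v_5$) and $v_3$ (neighbors $v_2,v_4$), with common neighbor $v_2$ and with $v_4v_5\in E(G)$. Deleting $v_1$, the surviving $2$-vertex $v_3$ again forces $c'(v_2)\ne c'(v_4)$; if in addition $c'(v_2)\ne c'(v_5)$, then $v_1$ is colored exactly as in the $4$-face case and we are done. The main obstacle is the subcase $c'(v_2)=c'(v_5)$: now $v_1$'s two neighbors share a color and no choice of $c(v_1)$ makes $v_1$ odd. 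My plan is to break this tie by recoloring $v_5$ before inserting $v_1$. What makes this plausible is that the oddness of $v_5$ itself does not depend on $c(v_5)$ but only on the colors in its neighborhood, so $v_5$ can be recolored without endangering itself, while $v_4v_5\in E(G)$ and $c'(v_2)\ne c'(v_4)$ leave room to pick a new color for $v_5$ distinct from $c'(v_2)$. The hard part — and where the real bookkeeping lies — is guaranteeing that changing $c(v_5)$ does not destroy the unique odd color of some other neighbor of $v_5$; I expect to control this by weighing the colors forbidden at $v_5$ (its neighbors' colors together with their odd colors) against the eight available, possibly via a short Kempe-type exchange when a neighbor's odd color would otherwise be lost, and only afterwards coloring $v_1$ as before.
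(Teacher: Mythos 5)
Your part (1) and the $4$-face half of part (2) are correct and coincide with the paper's own proof: delete one $2$-vertex, observe that its two neighbors are forced to carry distinct colors (by the edge $v_2v_3$ in case (1), and by the oddness of the surviving $2$-vertex $v_3$ in the $4$-face case), then color the deleted vertex avoiding the neighbors' colors and odd colors. The $5$-face case, however, contains a genuine gap, and you have located it yourself: the subcase $c'(v_2)=c'(v_5)$ is never resolved; only a plan is stated (``I expect to control this \dots possibly via a short Kempe-type exchange''). That plan cannot be completed by the counting available. Recoloring $v_5$ from $a=c'(v_5)$ to a new color $b$ flips, in the neighborhood of every neighbor $u$ of $v_5$, the parities of both $a$ and $b$, and $u$ loses all of its odd colors exactly when its set of odd colors equals $\{a,b\}$. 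So each neighbor $u$ of $v_5$ forbids $c'(u)$ (properness) and possibly one further color, and $b$ must in addition avoid $a$ and $c'(v_2)$. But $v_5$ has the $2$-neighbor $v_1$ in $G$, so by Lemma~\ref{tool}(3) it is not a $4$-vertex (and by Lemmas~\ref{no-3-v} and~\ref{tool}(1) not a $3^-$-vertex), hence $d_G(v_5)\ge 5$; even in $G-v_1$ this leaves up to $2\cdot 4+2=10>8$ forbidden colors, so no admissible color for $v_5$ is guaranteed. A Kempe-type exchange does not rescue this: swapping colors along a chain flips parities in the neighborhoods of every vertex on the chain, which is precisely what an odd coloring cannot absorb without further control.

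The paper closes the $5$-face case with no recoloring at all. With $v_1,v_3$ the two $2$-vertices, both $v_4$ and $v_5$ are $4^+$-vertices having a $2$-neighbor ($v_3$ and $v_1$ respectively), hence each is convenient: an odd vertex is convenient by definition, and an even vertex with a $2$-neighbor is convenient as well. Since $v_4v_5$ is an edge of $f$, two convenient vertices are adjacent, contradicting Lemma~\ref{tool}(2). This is exactly the situation the convenient-vertex machinery (Lemma~\ref{conven} and Lemma~\ref{tool}(2)) was built for; your attempt rebuilds the coloring by hand and founders precisely where that structural lemma would have ended the proof in one line.
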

\begin{proof}
(1)suppose that a $3$-face $f$ is   incident with a $2$-vertex. Let $f=[uvw]$ and $w$ is a $2$-vertex. Let $G'=G-w$. Then $G'$ has an odd $8$-coloring $c$ by the minimality of $G$. Then color $w$ with color in $[8]\setminus \{c(u),c_o(u),c(v),c_o(v)\}$. Since $uv\in E(G)$,  $c(u)\neq c(v)$. Then $w$ has an odd color, a contradiction.

(2)First suppose that $4$-face $f=[v_1v_2v_3v_4]$ is incident with two $2$-vertices. By Lemma \ref{tool}(1), we assume that $v_1$ and $v_3$ are $2$-vertices. Let $G'=G-v_1$. Then $G'$ has an odd $8$-coloring $c$.   Color each vertex other than $v_1$ in $G$  with the same color in $G'$ and color  $v_1$ with $\lbrack 8 \rbrack\setminus\{c(v_2),c(v_4),c_o(v_2),c_o(v_4)\}$. Since $v_3$ is a $2$-vertex, $c(v_2)\neq c(v_4)$. Then $v_1$ also has an odd coloring.  Then the coloring of $G'$ can return back to $G$, a contradiction.

  Next suppose  that $5$-face $f = [v_1v_2v_3v_4v_5]$ is incident with two $2$-vertices. By Lemma \ref{tool}(1), we
assume that $v_1$ and $v_3$ are $2$-vertices.  Then $v_5$ and $v_4$ are convenient,  which contradicts Lemma \ref{tool}(2).
\end{proof}

 \begin{lem}\label{3-f}
 (1)Any two $4$-vertices are not adjacent.

(2) If $f$ is a $3$-face, then  $f$ is incident with at least two non-convenient $6^+$-vertices.
  \end{lem}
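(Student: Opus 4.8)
My plan is to derive part (2) as a short consequence of part (1) together with the earlier lemmas, and to concentrate the real effort on the reducibility argument for part (1). So I begin with part (1). Suppose for contradiction that two $4$-vertices $u$ and $v$ are adjacent. First I record the local structure forced by Lemma~\ref{tool}(3): with $k=4$ we have $2k-8=0$, so a $4$-vertex has no $2$-neighbor and no convenient neighbor; hence $u$ and $v$ are themselves non-convenient (even, with no $2$-neighbor), and each of their four neighbors is a non-convenient $4^+$-vertex. The reduction I would use mirrors Lemma~\ref{tool}(2): subdivide the edge $uv$ with a new $2$-vertex $w$ to obtain $G'$. Subdividing an edge cannot create adjacent triangles, and $G'$ has the same number of $4^+$-vertices as $G$ while $u$ and $v$ each lose one $4^+$-neighbor (their common edge is replaced by the path $uwv$); thus the number of $4^+$-neighbors of $4^+$-vertices strictly drops, and the second minimality clause yields an odd $8$-coloring $c'$ of $G'$. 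Since $w$ is a $2$-vertex, its odd-coloring constraint forces $c'(u)\neq c'(v)$, so the restriction $c=c'|_{V(G)}$ is a proper coloring of $G$, and every vertex other than $u$ and $v$ retains the neighborhood it had in $G'$ and therefore stays odd.

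The hard part is precisely the odd-coloring condition at $u$ and $v$ themselves: because they are non-convenient, Lemma~\ref{conven} does not apply, and replacing the neighbor $w$ by the neighbor $v$ (resp.\ $u$) can flip the parity of two colors in the relevant neighborhood. Writing $A_u=\{c(v_1),c(v_2),c(v_3)\}$ for the colors of the three remaining neighbors of $u$, a short parity check shows that $u$ fails to be odd in $G$ only in the degenerate cases $A_u=\{p,p,q\}$ with $c(v)=q$ or $A_u=\{p,p,p\}$ with $c(v)=p$, and symmetrically for $v$. To finish I would repair these coincidences by recoloring only $u$ and $v$: when I recolor $u$, the sole vertices whose oddness can be disturbed are its neighbors $v_1,v_2,v_3$, and since each such neighbor has even degree its remaining neighborhood has odd size and hence forbids at most one color for $u$; together with the at most four properness constraints this leaves room inside $[8]$ to pick new colors for $u$ and $v$ that are proper on $uv$, make $u$ and $v$ odd, and keep $v_1,v_2,v_3,u_1,u_2,u_3$ odd. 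I expect this simultaneous recoloring of the two endpoints---balancing properness, the oddness of $u$ and $v$, and the fragile oddness of their six non-convenient neighbors---to be the main obstacle, and the place where the full argument needs a careful case analysis (or a Kempe-type swap) rather than the clean appeal to convenience available in Lemma~\ref{tool}(2).

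Given part (1), part (2) is immediate. Let $f=[xyz]$ be a $3$-face. By Lemma~\ref{4-f}(1) and Lemma~\ref{no-3-v} its three vertices are pairwise adjacent $4^+$-vertices, and by Lemma~\ref{tool}(2) at most one of them is convenient, so at least two are non-convenient; recall that a non-convenient vertex has even degree. If one of $x,y,z$ is a $4$-vertex, say $x$, then its two triangle-neighbors $y$ and $z$ are non-convenient by Lemma~\ref{tool}(3), are not $4$-vertices by part (1), and being even have degree at least $6$, so $y$ and $z$ are two non-convenient $6^+$-vertices. If none of $x,y,z$ is a $4$-vertex, then the (at least two) non-convenient ones among them are even and of degree at least $5$, hence of degree at least $6$. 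In either case $f$ is incident with at least two non-convenient $6^+$-vertices, as required.
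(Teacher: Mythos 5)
Your part (2) is correct and uses essentially the paper's ingredients (Lemmas~\ref{4-f}(1), \ref{no-3-v}, \ref{tool}(2),(3), part (1), and the fact that non-convenient vertices have even degree); if anything it is stated more cleanly than the paper's contradiction argument. But part (2) rests on part (1), and your proposal for part (1) has a genuine gap that you yourself flag but do not close: after subdividing $uv$, nothing forces the colors of $u_1,u_2,u_3$ to be distinct, so the degenerate patterns $A_u=\{p,p,q\}$ or $\{p,p,p\}$ really can occur, and the ``repair by recoloring $u$ and $v$'' is not a routine detail. Count the constraints on a new color for $v$ when $u$ fails to be odd: properness forbids $c(v_1),c(v_2),c(v_3)$ and the color of $u$; each $v_i$ is non-convenient of even degree (by Lemma~\ref{tool}(3)) and can forbid one further color to preserve its own oddness; keeping $u$ odd forbids one more; and the current color of $v$ is excluded as well. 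That is up to $8$ distinct forbidden colors in an $8$-color palette, so the greedy repair can be stuck. Your count (``at most four properness constraints'' plus three neighbor-oddness constraints) omits the cross-constraint that $u$'s oddness depends on $v$'s new color and $v$'s oddness depends on $u$'s new color; this interaction is exactly why you were forced to appeal to an unspecified ``careful case analysis or Kempe-type swap,'' i.e., the proof is not complete at its crux.

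The paper avoids this difficulty with a different construction, and the difference is the whole point of its proof: instead of subdividing $uv$, it deletes both $u$ and $v$ and joins the three remaining neighbors of $u$ pairwise (and likewise for $v$) by paths of length two through new $2$-vertices $x_1,\dots,x_6$. The resulting $G'$ has two fewer $4^+$-vertices, so the first minimality clause yields an odd $8$-coloring; and in any odd coloring a $2$-vertex forces its two neighbors to receive distinct colors (else it has no odd color in its neighborhood), so $c(u_1),c(u_2),c(u_3)$ are pairwise distinct, and similarly for $v_1,v_2,v_3$. Consequently, coloring $u$ greedily avoiding the six colors $c(u_i),c_o(u_i)$ and then $v$ avoiding seven colors always succeeds, and oddness at $u$ and $v$ is automatic: a neighborhood consisting of three distinct colors plus one further color always has a color of multiplicity one, whatever $c(v)$ (resp.\ $c(u)$) happens to be. In short, the paper's gadget manufactures the distinctness that your subdivision approach lacks, which is precisely what lets it bypass the simultaneous-recoloring obstacle on which your argument stalls.
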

\begin{proof}
  (1)Suppose otherwise that two $4$-vertices $u$ and $v$ are   adjacent. Let $u_1,u_2,u_3$ be other neighbors of $u$, $v_1,v_2,v_3$ be other neighbors of $v$. By Lemma \ref{tool}(3), $4$-vertex is not adjacent to $2$-vertex. Each of $u_1,u_2,u_3,v_1,v_2$ and $v_3$ is $4^+$-vertex by Lemma \ref{no-3-v}.   Let $G'$ be the graph obtained from $G-\{u,v\}$ by   connecting $u_1u_2,u_2u_3,u_3u_1,v_1v_2,v_2v_3,v_3v_1$ if they are not already adjacent and splitting $u_1u_2,u_2u_3,u_3u_1,v_1v_2,v_2v_3,v_3v_1$ with $2$-vertices $x_1,x_2,x_3,x_4,x_5,x_6$ respectively. Since $G'$ has fewer $4^+$-vertices than $G$, $G'$ has an odd $8$-coloring by the minimality of $G$. Then color $u$ with color in $[8]\setminus\{c(u_1),c(u_2),c(u_3),c_o(u_1),c_o(u_2),c_o(u_3)\}$, color $v$ with color in $[8]\setminus\{c(v_1),c(v_2),c(v_3),c_o(v_1),c_o(v_2),c_o(v_3),c(u)\}$.  Since each of $x_1,x_2,x_3,x_4,x_5$ and $x_6$ is a $2$-vertex, $c(u_1)\neq c(u_2) \neq c(u_3)$ and $c(v_1)\neq c(v_2) \neq c(v_3)$. Then each of $u$ and $v$ has an odd color, a contradiction.

  (2)By Lemma \ref{tool}(2), $f$ is incident with at least two non-convenient vertices. By Lemma \ref{3-f}(1), $f$ is incident with at least one non-convenient $6^+$-vertex. Thus, suppose otherwise that $3$-face $f$ is incident with one non-convenient $6^+$-vertex. Let $f=[uvw]$ and $u$ is a non-convenient $6^+$-vertex. By Lemmas \ref{4-f}(1), \ref{no-3-v} and \ref{3-f}(1), one of  $v$ and $w$ is a $4$-vertex and the other is a $5$-vertex. Then $4$-vertex is adjacent to one convenient vertex, which contradicts Lemma \ref{tool}(3).
\end{proof}

\begin{lem}\label{6-v}
Let $v$ be a non-convenient $6$-vertex, $v_1,v_2,\ldots,v_6$ be neighbors of $v$, $f_i$ be the face incident with $v_i$ and $v_{i+1}$ for $1\leq i\leq6$ and $i+1=1$ if $i=6$. If $f_1$ and $f_3$ are $3$-faces, $f_4,f_5$ and $f_6$ are $4_1$-faces, then each of $v_1,v_4,v_5$ and $v_6$ is not $5_2$- or $6_4$-vertex.
  \end{lem}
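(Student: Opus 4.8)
The plan is to argue by contradiction using the standard delete-and-extend scheme, after first pinning down the local picture. Since $v$ is a non-convenient $6$-vertex, it is an even vertex with no $2$-neighbour, so by Lemma~\ref{no-3-v} every $v_i$ is a $4^+$-vertex. The triangles $f_1,f_3$ supply the edges $v_1v_2$ and $v_3v_4$, and each $4_1$-face $f_j$ ($j\in\{4,5,6\}$) is a $4$-face $[vv_jw_jv_{j+1}]$ whose unique $2$-vertex $w_j$ must be the vertex opposite $v$ (it cannot be $v$ or a neighbour of $v$, as $v$ has none). Hence $w_j\sim v_j,v_{j+1}$, so that $v_5\sim w_4,w_5$ and $v_6\sim w_5,w_6$ have two $2$-neighbours each while $v_1\sim w_6$ and $v_4\sim w_4$ have one each. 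In particular $v_1,v_4,v_5,v_6$ are all convenient; consequently, by Lemma~\ref{tool}(2), the triangle-partner $v_2$ of $v_1$ and the triangle-partner $v_3$ of $v_4$ are non-convenient.

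Assume for contradiction that some $v_j$, $j\in\{1,4,5,6\}$, is a $5_2$- or $6_4$-vertex, and put $G'=G-v$. Deleting the $4^+$-vertex $v$ strictly decreases the number of $4^+$-vertices and cannot create adjacent triangles, so by minimality $G'$ has an odd $8$-colouring $c$. It remains to colour $v$. Properness forces $c(v)\notin\{c(v_1),\dots,c(v_6)\}$; to keep the odd colours of the two non-convenient neighbours $v_2,v_3$ it suffices to also forbid $c_o(v_2),c_o(v_3)$ (adding $v$ only flips the parity of the single colour $c(v)$, so any previously odd colour $\neq c(v)$ survives); and the four convenient neighbours $v_1,v_4,v_5,v_6$ can each be repaired afterwards by recolouring a $2$-neighbour as in Lemma~\ref{conven}, the shared vertices $w_4,w_5,w_6$ being available for this. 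The only genuine requirement left is that $v$ itself receive an odd colour, i.e.\ that $\{c(v_1),\dots,c(v_6)\}$ contain a colour of odd multiplicity.

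This is where the $5_2$/$6_4$ hypothesis enters. The idea is to recolour the distinguished neighbour $v_j$ to a colour distinct from those of the other five neighbours of $v$, so that it occurs exactly once around $v$ and hands $v$ an odd colour. A $5_2$-vertex is odd and so stays odd-coloured under any recolouring, while both a $5_2$- and a $6_4$-vertex carry enough $2$-neighbours (two, resp.\ four) to re-establish their own odd colour and to absorb the parity changes at those $2$-neighbours; moreover, by Lemma~\ref{tool}(3) such a vertex sits at its $2$-neighbour maximum and hence has only one or two \emph{other} neighbours whose odd colours could be disturbed. Counting the colours barred from $c(v)$ — at most the five remaining neighbour colours together with $c_o(v_2),c_o(v_3)$, reduced by the coincidence created in the recolouring — leaves a colour in $[8]$ for $v$, and assigning it completes an odd $8$-colouring of $G$, contradicting the choice of $G$.

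The step I expect to be the main obstacle is the simultaneous parity bookkeeping in the last paragraph: one must check, over the finitely many ways in which $\{c(v_1),\dots,c(v_6)\}$ can have all multiplicities even, that recolouring $v_j$ together with the shared $2$-vertices $w_4,w_5,w_6$ can be carried out without breaking the odd colours of $v_2,v_3$ or of the non-convenient non-$2$-neighbours of $v_j$, while still freeing a colour for $v$. The cases $j=1$ and $j=4$ look most delicate, since there $v_j$ is adjacent to $v_2$, respectively $v_3$, so recolouring $v_j$ interacts directly with the odd colour of a non-convenient vertex; it is precisely the surplus of $2$-neighbours granted by the $5_2$/$6_4$ assumption that is expected to supply the slack needed to settle every case.
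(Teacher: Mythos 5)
Your structural analysis (the $2$-vertices $w_4,w_5,w_6$ opposite $v$ on the $4_1$-faces, $v_1,v_4,v_5,v_6$ convenient, $v_2,v_3$ non-convenient by Lemma~\ref{tool}(2)) is correct and matches the paper's setup, but your colouring argument takes a genuinely different route from the paper's, and it has a gap I do not think can be patched without reproducing the paper's technique. You delete $v$ and try to extend; the paper instead splits one edge ($vv_1$, resp.\ $vv_6$) with a $2$-vertex $x$ --- exactly what the second minimality criterion (number of $4^+$-neighbours of $4^+$-vertices) is designed for. After splitting, the odd $8$-colouring of $G'$ restricted to $V(G)$ is already a proper colouring of $G$ in which every vertex except possibly $v$ and $v_1$ keeps its odd colour, and $v_1$ is convenient, hence repairable by Lemma~\ref{conven}; so the only issue is the parity at $v$, and if that fails the six neighbour colours must pair up into three colour classes of size two (the triangles and the $2$-vertices force $c(v_1)\neq c(v_2)$, $c(v_3)\neq c(v_4)$, $c(v_4)\neq c(v_5)$, $c(v_5)\neq c(v_6)$, $c(v_6)\neq c(v_1)$). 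The paper then recolours $v_1$ (repairing its $2$-neighbours), which hands $v$ a colour of multiplicity one, and if this destroys the odd colour of $v_2$ it recolours $v$ as well. At no point does the paper need to find a fresh colour for a $6$-vertex.

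Your version never gets that far, because the extension step can fail outright: to colour the deleted vertex $v$ you must avoid the six neighbour colours (properness) and $c_o(v_2),c_o(v_3)$ (the two non-convenient neighbours), i.e.\ up to $8$ forbidden colours --- for instance when the six neighbours receive six distinct colours and $c_o(v_2),c_o(v_3)$ are the remaining two. Your proposed fix, recolouring $v_j$ ``to a colour distinct from those of the other five neighbours,'' does not reduce this count: the forbidden set becomes the five old colours, the new colour of $v_j$, and $c_o(v_2),c_o(v_3)$ --- still $8$. The parenthetical ``reduced by the coincidence created in the recolouring'' is unsubstantiated, since a colour chosen distinct from all the others creates no coincidence. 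One could instead aim the new colour of $v_j$ at $c_o(v_2)$ or $c_o(v_3)$, but in your hardest cases $j\in\{1,4\}$ the vertex $v_j$ is adjacent to $v_2$, resp.\ $v_3$, so that recolouring attacks precisely the parity you must protect; worse, if the old colour of $v_j$ happens to be the unique odd colour of some non-convenient neighbour $y$ of $v_j$, then removing it breaks $y$, and forbidding $c_o(y)$ as the new colour does not repair that. Since you explicitly defer this simultaneous bookkeeping as ``the main obstacle,'' the core of the lemma remains unproved; the edge-splitting device, which keeps $v$ coloured and reduces everything to a single parity failure with a rigid $(2,2,2)$ colour pattern, is the missing idea.
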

\begin{proof}
 First  suppose otherwise that $v_1$ is a $5_2$- or $6_4$-vertex by symmetry. Let $w$ be the common $2$-neighbor of $v_1$ and $v_6$, $v_1',v_1'',v_1'''$ be other neighbors of $v_1$ if $v_1$ is a $6_4$-vertex, $v_1',v_1''$ be other neighbors of $v_1$ if $v_1$ is a $5_2$-vertex where $v_2'$ is a $2$-vertex. Let $G'$ be the graph from $G$ by splitting $vv_1$ with $2$-vertex $x$. Since $4^+$-vertices $v$ and $v_1$ has fewer $4^+$-neighbor in $G'$, $G'$ has an odd $8$-coloring $c'$ by the minimality of $G$. Let $c(z)=c'(z)$ for $z\in V(G)$. Since $x$ is a $2$-vertex, $c(v)\neq c(v_1)$.  By Lemma \ref{conven}, $v_1$ always has an odd color.  If $v$ has an odd color, then $G$ has an odd $8$-coloring $c$,  a contradiction. Thus, $v$ has no odd color. Then we assume that $c(v)=1$, $c(v_1)=c(v_5)=2,c(v_2)=c(v_4)=3,c(v_3)=c(v_6)=4$. Recolor $v_1$ with color in $[8]\setminus \{c_o(v_1'),c_o(v_1''),c_o(v_1'''),c(v_6),c(v),c(v_2),2\}$ if $v_2$ is a $6_3$-vertex, with color in $[8]\setminus \{c_o(v_1'),c(v_1''),c_o(v_1''),c(v_6),c(v),c(v_2),2\}$ if $v_2$ is a $5_2$-vertex. In this case, if $v_2$ has an odd color, then $G$ is odd $8$-colorable, a contradiction. Thus,  $v_2$ has no odd color in the case of $c(v)=1$. Then recolor $v$ with color in $[8]\setminus \{1,2,3,4,c_o(v_3),c(v_1)\}$. Then $G$ is odd $8$-colorable, a contradiction.

 Next  suppose otherwise that $v_6$ is a $5_2$- or $6_4$-vertex by symmetry. Let   $v_6',v_6'',v_6'''$ be other neighbors of $v_6$ if $v_6$ is a $6_4$-vertex where $v_6',v_6''$ are $2$-vertices, $v_6',v_6''$ be other neighbors of $v_6$ if $v_6$ is a $5_2$-vertex. Let $G'$ be the graph from $G$ by splitting $vv_6$ with $2$-vertex $x$. Since $4^+$-vertices $v$ and $v_6$ has fewer $4^+$-neighbor in $G'$, $G'$ has an odd $8$-coloring $c'$ by the minimality of $G$. Let $c(z)=c'(z)$ for $z\in V(G)$. Since $x$ is a $2$-vertex, $c(v)\neq c(v_6)$.  By Lemma \ref{conven}, $v_6$ always has an odd color.  If $v$ has an odd color, then $G$ has an odd $8$-coloring $c$,  a contradiction. Thus, $v$ has no an odd color. Then we assume that $c(v)=1$, $c(v_2)=c(v_4)=2,c(v_1)=c(v_5)=3,c(v_3)=c(v_6)=4$. In this case, we remove the color of $v$ and $v_6$. Then color $v_6$ with color in $[8]\setminus \{c_o(v_6'),c_o(v_6''),c(v_6'''),c_o(v_6'''),c(v_1),c(v_5),c_o(v)\}$ if $v_6$ is a $6_3$-vertex, with color in $[8]\setminus \{c(v_6),c_o(v_6'),c(v_6''),c_o(v_6''),c(v_1),c(v_5),c_o(v)\}$ if $v_6$ is a $5_2$-vertex, color $v$ with color in $[8]\setminus \{1,2,3,4,c(v_6),c_o(v_2),c_o(v_3)\}$. Then $G$ is odd $8$-colorable, a contradiction.
\end{proof}

\begin{lem}\label{5_2}
Let $uvw$ be a $2$-path, $v$ be a $2$-vertex,  $f_1$ and $f_2$ be faces incident with $uvw$. There is no case that $u,w$ are $5_2$-vertices and $f_1$ and $f_2$ are $4_1$-faces.
\end{lem}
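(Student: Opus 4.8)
The plan is to exhibit the configuration as reducible by deleting the central $2$-vertex $v$ and re-extending the colouring. First I would fix notation for the local structure: since $f_1,f_2$ are $4$-faces through the path $uvw$, write $f_1=[uvwa]$ and $f_2=[uvwb]$, so that $a$ and $b$ are the two remaining corners. Because $f_1,f_2$ are $4_1$-faces whose unique $2$-vertex is $v$, the vertices $a,b$ are not $2$-vertices, and by Lemma~\ref{no-3-v} they are $4^{+}$-vertices, each adjacent to both $u$ and $w$. As $u,w$ are $5_2$-vertices, each has exactly one further $2$-neighbour; call them $u_1$ and $w_1$, and let $u_2$ (resp. $w_2$) denote the last neighbour of $u$ (resp. $w$). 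Note that $u$ and $w$ have odd degree, hence are convenient and, being odd vertices, automatically satisfy the odd-colour condition once properly coloured.

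Next I would set $G'=G-v$. Deleting the $2$-vertex $v$ leaves the number of $4^{+}$-vertices and the number of $4^{+}$-neighbours of $4^{+}$-vertices unchanged while strictly decreasing $|E(G)|$, so by minimality $G'$ has an odd $8$-colouring $c$. In $G$ the only vertices whose neighbourhoods differ from those in $G'$ are $u$ and $w$, and both have odd degree, so they are automatically odd; every other vertex keeps its odd colour from $c$. Hence it suffices to colour $v$ so that $v$ itself is odd, i.e. so that $c(u)\neq c(w)$. If already $c(u)\neq c(w)$, I colour $v$ with a colour in $[8]\setminus\{c(u),c(w)\}$ and obtain an odd $8$-colouring of $G$, a contradiction.

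It remains to treat the case $c(u)=c(w)=:\alpha$. By symmetry I would attempt to recolour $u$ with a colour $\beta\neq\alpha$ and afterwards colour $v$. Each of $c(a),c(b),c(u_1),c(u_2)$ differs from $\alpha$, so the proper constraint at $u$ together with $\beta\neq\alpha$ forbids at most five colours, and preserving the odd colour of the $2$-vertex $u_1$ (whose other neighbour I call $u_1'$) forbids one more, namely $c(u_1')$; this still leaves at least two candidates for $\beta$. Such a recolouring keeps $u,w$ odd (they have odd degree) and makes $v$ odd because $\beta\neq\alpha$; colouring $v$ from $[8]\setminus\{\beta,\alpha\}$ then completes the extension, provided the remaining neighbours of $u$ stay odd.

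\textbf{The main obstacle} is exactly this proviso: shifting $u$ from $\alpha$ to $\beta$ perturbs the neighbourhoods of $a,b$ and $u_2$, and, unlike $u,w$, these vertices may be even and \emph{non-convenient}; moreover $a$ and $b$ are shared with $w$, so their odd colours cannot be repaired through $w$ either, and this coupling is precisely why the lemma singles out the symmetric $5_2$-pattern. I would therefore choose $\beta$ so as to preserve an odd colour of each of $a,b,u_2$, which is possible whenever some candidate $\beta$ avoids the unique ``fragile'' colour at each of them. If instead every candidate $\beta$ is blocked, the blocking pins the colours on $N(a),N(b),N(u_2)$ into a rigid pattern; combining this with the symmetric information obtained by the same attempt at $w$, and using the spare $2$-vertices $u_1,w_1$ (and $v$) as adjustable buffers, I expect to produce a coordinated recolouring of $u,v,u_1$ (and symmetrically at $w$) satisfying all proper and odd constraints at once, in the spirit of the forced-pattern recolouring of Lemma~\ref{6-v}. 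Ruling out this last rigid pattern is where essentially all the work lies.
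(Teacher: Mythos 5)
Your reduction (delete the $2$-vertex $v$, extend the coloring) works only in the easy case $c(u)\neq c(w)$, and the entire content of the lemma lives in the case $c(u)=c(w)=\alpha$, which your proposal leaves unresolved and explicitly defers (``ruling out this last rigid pattern is where essentially all the work lies''). This is a genuine gap, not a routine detail: when you try to recolor $u$ to some $\beta$, properness forbids $c(a),c(b),c(u_1),c(u_2)$, oddness of $v$ forbids $\alpha$, oddness of the $2$-vertex $u_1$ forbids $c(u_1')$, and then each of $a$, $b$, $u_2$ can forbid one further color (namely, if the odd-count color set of such a vertex is exactly $\{\alpha,\gamma\}$, then $\beta=\gamma$ kills all its odd colors). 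That is up to $9$ forbidden colors from a palette of $8$, so the single-vertex recoloring can genuinely be blocked, and you have no argument for what to do then. Your proposal also under-uses the structure: by Lemma~\ref{tool}(2)(3) the vertices $a,b$ are not merely $4^{+}$-vertices but non-convenient $6^{+}$-vertices, which is exactly why their odd colors cannot be repaired locally and why the blocked case is dangerous.

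The paper sidesteps this case entirely by choosing a different reducible operation: instead of deleting $v$, it splits the edge $xw$ (your $aw$) with a new $2$-vertex $z$, which is smaller in the paper's order because $x$ and $w$ lose a $4^{+}$-neighbor. Since $v$ survives as a $2$-vertex of $G'$, any odd $8$-coloring of $G'$ automatically satisfies $c(u)\neq c(w)$ (else $v$ has no odd color), and the new $2$-vertex forces $c(x)\neq c(w)$; so your problematic case never arises. Restricting this coloring to $G$, the only vertex whose neighborhood changed is $x$, and the key parity observation is that if $x$ has no odd color then \emph{every} color appears an even number of times in $N(x)$, so recoloring $w$ with \emph{any} admissible color instantly gives $x$ an odd color; if this in turn breaks $y$, the same observation applied to $y$ lets one recolor $u$ (avoiding the current color of $w$, so $x$ stays repaired). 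This two-step chain, with $u$ and $w$ always safe because they have odd degree (Lemma~\ref{conven}), closes the proof with simple counts of at most $7$ forbidden colors at each step --- precisely the robustness your deletion-based setup lacks. To salvage your approach you would need a genuinely new argument (e.g.\ recoloring $u$ and $w$ simultaneously with distinct colors to preserve the parity of $\alpha$ in $N(a)$ and $N(b)$), and as written that argument is absent.
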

\begin{proof}
Suppose otherwise that $u,w$ are $5_2$-vertices and $f_1$ and $f_2$ are $4_1$-faces. Let $x$ and $y$ be other vertices incident with $f_1$ and $f_2$, respectively. Since $w$ and $u$ are convenient, $x$ and $y$ are non-convenient $6^+$-vertices by Lemma \ref{tool}(2)(3).  Let $G'$ be the graph obtained from $G$ by splitting $xw$ with  $2$-vertex $z$. Since $4^+$-vertices $x$ and $w$ has fewer $4^+$-neighbors in $G'$, $G'$ has an odd $8$-coloring $c$ by the minimality of $G$.   Since $z,v$ is a $2$-vertex, $c(x)\neq c(w)\neq c(u)$. We assume that $c(x)=1,c(w)=2,c(u)=3$.  By Lemma \ref{conven}, $w$ always has an odd color.  If $x$ has an odd color in $G$, then $G$ has an odd $8$-coloring $c$,  a contradiction. Thus, $x$ has no odd color.  Let $u',u''$ be other neighbors of $u$ and $u'$ be a $2$-vertex, $w',w''$ be other neighbors of $w$ and $w'$ be a $2$-vertex.  Then recolor $w$ with color in $[8]\setminus \{1,3,2,c(y),c(w'),c(w''),c_o(w'')\}$. Then $x$ has an odd color $c(w)$ (current color). In this case, if $y$ has an odd color, then $G$ is odd $8$-colorable, a contradiction. Thus, $y$ has no odd color. Then  recolor $u$ with color in $[8]\setminus \{1,c(w),3,c(y),c(u'),c(u''),c_o(u'')\}$. Then $x$ has odd color $c(w)$ and $y$ has odd color $c(u)$ (current color). Then $G$ is odd $8$-colorable, a contradiction.
\end{proof}
\section{proof}
 We are now ready to  complete the proof of Theorem~\ref{th1}.   Let each   $x\in V(G)\cup F(G)$ has an initial charge of $\mu(x)=d(x)-4$.   By Euler's Formula, $|V(G)|+|F(G)|-|E(G)|\geq 0$. Then $\sum_{v\in V }\mu(v)+\sum_{f\in F }\mu(f)\leq0$.

Let $\mu^*(x)$ be the charge of $x\in V(G)\cup F(G)$ after the discharge procedure. To lead to a contradiction, we shall prove that $\sum_{x\in V(G)\cup F(G)}  \mu^*(x) > 0$.  Since the total sum of charges is unchanged in the discharge procedure, this contradiction proves Theorem~\ref{th1}.

Let $v$ be a  non-convenient $6$-vertex, $v_1,v_2,\ldots,v_6$ be neighbors of $v$, $f_i$ be the face incident with $v_i$ and $v_{i+1}$ for $1\leq i\leq6$ and $i+1=1$ if $i=6$. We call $v$ is {\em special } a vertex if $f_1$ and $f_3$ are $3$-faces, $f_4,f_5$ and $f_6$ are $4_1$-faces.

  {\bf {Discharging Rules}}
  \begin{enumerate}[(R1)]
    \item Every $5^+$-vertex sends $\frac{1}{2}$ to each adjacent $2$-vertex.
    \item Every $4^+$-face sends $\frac{1}{2}$ to each incident $2$-vertex.
    \item Every non-convenient $6^+$-vertex sends $\frac{1}{2}$ to each incident $3$-face or $4_1$-face.
    \item Every  convenient $k_i$-vertex $v$  sends $\frac{1}{8} $ to  adjacent non-convenient special $6$-vertex where $k\geq5, i\leq k-1$ and $i=1$ if $k=5$, $i\leq3$ if $k=6$.
    \item Every $6^+$-face sends $\frac{1}{8}$ to each incident $4^+$-vertex.
  \end{enumerate}
  Next, we check the final charge of each vertex and each face has non-negative charge.
\begin{enumerate}[1.]
  \item Let $f$ be a $3$-face. By Lemma \ref{3-f}(2), $f$ is incident with at least two non-convenient $6^+$-vertices. Then each non-convenient $6^+$-vertex sends $\frac{1}{2}$ to $f$ by (R3). Thus, $\mu^*(f)=3-4+ \frac{1}{2}\times2 =0$.

\item Let $f=[v_1v_2v_3v_4]$ be a $4$-face. By Lemma \ref{4-f}, $f$ is incident with at most one $2$-vertex. If $f$ is not incident with any $2$-vertex, then $f$ does not send any charge by Rules. Thus, $\mu^*(f)=4-4=0$. If $f$ is incident with one $2$-vertex $v_1$, then each of $v_2$ and $v_4$ are convenient by the definition of convenient vertex. Then $v_3$ is non-convenient $6^+$-vertex by Lemma \ref{tool}. By (R2) and (R3), $f$ sends $\frac{1}{2}$ to $v_1$ and $v_3$ sends $\frac{1}{2}$ to $f$. Thus,  $\mu^*(f)=4-4-\frac{1}{2}+ \frac{1}{2}=0$.
\item Let $f$ be a $5^+$-face. By Lemma \ref{tool}(1), $f$ is incident with at most $\lfloor\frac{d(f)}{2}\rfloor$ $2$-vertices. By (R2), $f$ sends $\frac{1}{2}$ to each incident $2$-vertices. If $d(f)=5$, then $f$ is incident with at most one $2$-vertex by Lemma \ref{4-f}(2). Thus, $\mu^*(f)=d(f)-4-\frac{1}{2}>0$. If $d(f)\geq 6$, then  $f$ sends $\frac{1}{8}$ to each incident $4^+$-vertex by (R5). Thus, $\mu^*(f)=d(f)-4-\lfloor\frac{d(f)}{2}\rfloor\times\frac{1}{2}-
    \lceil \frac{d(f)}{2}\rceil\times\frac{1}{8}>0$.

  \item Let $v$ be a $2$-vertex. By Lemma \ref{tool}, $2$-vertex is adjacent to the  $5^+$-vertex. By Lemma \ref{4-f}(1), $3$-face is not incident with any $2$-vertex.     By (R1), $v$ gets $\frac{1}{2}$ from each $5^+$-neighbor and incident $4^+$-face. Thus,  $\mu^*(v)=2-4+\frac{1}{2}\times4=0$.  By Lemma \ref{no-3-v}, $G$ has no $3$-vertex.
    \item Let $v$ be a $4$-vertex. By Rules,   $v$  does not send and get any discharge procedure. Thus, $\mu^*(v)=4-4=0$.

    \item Let $v$ be a $5$-vertex. By Lemma \ref{tool}(2), $v$ is incident with at most two $2$-vertices. If $v$ is not incident with any $2$-vertex, then $v$ does not send any charge. Thus, $\mu^*(v)=5-4>0$. If $v$ is a $5_2$-vertex, then
 $v$ sends $\frac{1}{2}$ to each $2$-neighbor by (R1). Thus, $\mu^*(v)=5-4-\frac{1}{2}\times2=0$. If $v$ is a $5_1$-vertex, then
 $v$ sends $\frac{1}{2}$ to each $2$-neighbor and $\frac{1}{8}$ to each   special $6^+$-neighbor by (R1) and (R4). Thus, $\mu^*(v)=5-4-\frac{1}{2}-\frac{1}{8}\times4=0$.

 \item Let $v$ be a $6$-vertex.   By Lemma \ref{tool}(2), $v$ is incident with at most four $2$-vertices. If $v$ is a $6_4$-vertex, then
  $v$ sends $\frac{1}{2}$ to each $2$-neighbor by (R1). Thus, $\mu^*(v)=6-4-\frac{1}{2}\times4=0$. If $v$ is a $6_3$-, $6_2$- or $6_1$-vertex,   $v$ sends $\frac{1}{2}$ to each $2$-neighbor and at most $\frac{1}{8}$ to each   $6^+$-neighbor by (R1) and (R4). Thus, $\mu^*(v)=6-4-\{\frac{1}{2}\times3+\frac{1}{8}\times3,
  \frac{1}{2}\times2+\frac{1}{8}\times4, \frac{1}{2}\times1+\frac{1}{8}\times5\}>0$. Thus, $v$ is non-convenient.

    Let $v_1,v_2,\ldots,v_6$ be    neighbors of   $v$, $f_i$ be the face incident with $v_i,v$ and $v_{i+1}$  for $i=\{1,2,3,4,5\}$ and $i+1=1$ if $i=6$.
   Since $G$ has no adjacent $3$-faces, $v$ is incident with at most three $3$-faces. If $v$ is incident with three $3$-faces, then $f_1,f_3,f_5$ are $3$-face. Then $v$ is incident with at most one $4_1$-face. Otherwise $v$ is incident with  two $4_1$-faces, then $f_2$ and $f_4$ are $4_1$-faces by symmetry. Then each of $v_3$ and $v_4$ is  convenient, which contradict Lemma \ref{3-f}(2). Thus, $\mu^*(v)=6-4-\frac{1}{2}\times4=0$.

  If $v$ is incident with two $3$-faces, then $f_1,f_3$ or $f_1,f_4$ are $3$-faces. In the former case,   $v$ is incident with at most three $4_1$-faces.  Otherwise $v$ is adjacent to six convenient vertices,  which contradict Lemma \ref{tool}(2). If $v$ is incident with at most two $4_1$-faces, then $\mu^*(v)=6-4-\frac{1}{2}\times4=0$. Thus, $v$ is incident with three $4_1$-faces. If $f_2$ and two of $f_4,f_5,f_6$ are $4_1$-faces, then $v$ is adjacent to five convenient vertices, which contradict Lemma \ref{tool}(2). Then  $f_4,f_5,f_6$ are $4_1$-faces. Thus, $v$ is special and each of $v_1,v_4,v_5,v_6$ is convenient. By Lemma \ref{6-v}, each of  $v_1,v_4,v_5,v_6$  is neither $5_2$- not $6_4$-vertex. By (R4), each of  $v_1,v_4,v_5,v_6$ sends $\frac{1}{8}$ to $v$.  Thus, Thus, $\mu^*(v)=6-4-\frac{1}{2}\times5+\frac{1}{8}\times4=0$. In the latter case, $v$ is incident with at most two $4_1$-faces. Otherwise $v$ is incident with  three $4_1$-faces. Assume that  then $f_2,f_3$ and $f_5$ are $4_1$-faces by symmetry. Then each of $v_2,v_3,v_4,v_5$ and $v_6$ is convenient, which contradict Lemma \ref{3-f}(2). Thus, $\mu^*(v)=6-4-\frac{1}{2}\times4=0$.

If $v$ is incident with at least one $3$-face, then $v$ is incident with at most three $4_1$-faces. Otherwise $v$ is incident with   at least five convenient vertices, which contradict Lemma \ref{3-f}(2). Thus, $\mu^*(v)=6-4-\frac{1}{2}\times4=0$.

   \item Let $v$ be a $7$-vertex. By Lemma \ref{tool}(2), $v$ is adjacent to at most six $2$-vertices. If $v$ is a $7_6$-vertex, then $v$ is incident with five $6^+$-faces since $5^-$-face is incident with at most one $2$-vertex by Lemma \ref{4-f}. By (R5), each  $6^+$-face sends $\frac{1}{8}$ to $f$. By (R1) and (R4), $v$ sends $\frac{1}{2}$ to each $2$-neighbor and $\frac{1}{8}$ to each  special $6^+$-neighbor. Thus, $\mu^*(v)=7-4-\frac{1}{2}\times6 -\frac{1}{8}+\frac{1}{8}\times5>0$. If $v$ is a $7_i$-vertex for $i\leq5$, then $\mu^*(v)=7-4-\frac{i}{2} -\frac{1}{8}\times (7-i)>0$. If $v$ is not adjacent to any $2$-vertex, then  $v$ does not send any charge. Thus, $\mu^*(v)=7-4>0$.
   \item Let $v$ be a $8^+$-vertex.  If $v$ is  even and not adjacent to any $2$-vertex, then $v$ is non-convenient. By (R3), $v$ sends at most $\frac{1}{2}$ to each incident face. Thus,  $\mu^*(v)=d(v)-4-\frac{1}{2}d(v)\geq0$. If $v$ is odd and not adjacent to any $2$-vertex, then  $v$ does not send  any charge. Thus,  $\mu^*(v)=d(v)-4>0$. If $v$ is  adjacent to some $2$-vertices, then $v$ is  convenient. By (R1) and (R4), $v$ sends at most $\frac{1}{2}$ to each neighbor. Thus,  $\mu^*(v)=d(v)-4-\frac{1}{2}d(v)\geq0$.
\end{enumerate}

   Finally, we show that $\sum_{x\in V(G)\cup F(G)}  \mu^*(x) > 0$.

   By above checking, if $v$ is  a convenient $5^+$-vertex  and $\mu^*(v)=0$ if and only if $v$ is a $5_2$-, $5_1$-, $6_4$- or $8_8$-vertex. If $v$ is a $6_4$- or $8_8$-vertex, then $v$ is incident with a $6^+$-face. By above checking, final charge of a $6^+$-face is more than $0$. Thus, if $G$ has a $6_4$- or $8_8$-vertex, then $\sum_{x\in V(G)\cup F(G)}  \mu^*(x) > 0$. By Lemma \ref{tool}(3), any $4$-vertex is not adjacent to $2$-vertex.  Thus, every convenient vertex in $G$ is $5_2$- or $5_1$-vertex.

   If $G$ has a special $6$-vertex $v$. Let $v_1,v_2,\ldots,v_6$ be neighbors of $v$, $f_i$ be the face incident with $v_i$ and $v_{i+1}$ for $1\leq i\leq6$ and $i+1=1$ if $i=6$, $f_1$ and $f_3$ be $3$-faces, $f_4,f_5$ and $f_6$ be $4_1$-faces. Note that $v_5$ is incident with at least two $2$-vertices. By Lemma \ref{6-v}, $v_5$ is not a $5_2$-vertex. Thus, $\mu^*(v_5)>0$. Then $\sum_{x\in V(G)\cup F(G)}  \mu^*(x) > 0$. Thus, $G$ has no special $6$-vertex.  Then $5_1$-vertex only sends charge to $2$-vertex. Thus, final charge of a $5_1$-vertex is more than $0$. Thus, every convenient vertex in $G$ is $5_2$-vertex.

   By above checking,  if $f$ is a face and  $\mu^*(f)=0$ if and only if $f$ is a $3$-
    or $4$-face. If $G$ has a $2$-vertex $v$, then $v$ is incident with two $4_1$-face. Let $w,u$ be two neighbors of $v$. Then  each of $u$ and $v$ be $5_2$-vertex, and each face incident with $v$ is $4_1$-face, which contradicts Lemma \ref{5_2}. Thus, $G$ has no $2$-vertex and has no $4_1$-face.  
    Then  $\mu^*(v)\geq d(v)-6-\frac{1}{2}\times\frac{d(v)}{2}>0$ for each non-convenient $6^+$-vertex since $v$ just sends charge to each incident $3$-face. By Lemma \ref{tool}(2) and \ref{3-f}(1), $G$ has a non-convenient $6^+$-vertex. Thus, $\sum_{x\in V(G)\cup F(G)}  \mu^*(x) > 0$.

\small

\end{document}